\author{H. Egger$^{1,2}$ \and J. Giesselmann$^{3,\ast}$}
\address{$^1$Institute for Numerical Mathematics, Johannes Kepler University, Linz, Austria\\
$^2$Johann Radon Institute for Computational and Applied Mathematics, Austria\\
$^3$Department of Mathematics, TU Darmstadt, Germany}
\email{herbert.egger@jku.at}
\email{jan.giesselmann@tu-darmstadt.de}
\title[Regularity and long time behavior of a nonlinear parabolic problem]
{Regularity and long time behavior of a doubly nonlinear parabolic problem and its discretization}
\newtheorem{theorem}{Theorem}
\newtheorem{lemma}[theorem]{Lemma}
\newtheorem{problem}[theorem]{Problem}
\theoremstyle{definition}
\newtheorem{remark}[theorem]{Remark}
\newtheorem{assumption}[theorem]{Assumption}
\def\E{\mathcal{E}}
\def\H{\mathcal{H}}
\def\D{\mathcal{D}}
\def\T{\mathcal{T}}
\def\dt{\partial_t}
\def\dx{\partial_x}
\def\ddt{\frac{d}{dt}}
\def\dtau{d_\tau}
\def\la{\langle}
\def\ra{\rangle}
\def\RR{\mathbb{R}}
\begin{document}

\begin{abstract}
   We study a doubly nonlinear parabolic problem arising in the modeling of gas transport in pipelines. Using convexity arguments and relative entropy estimates we show uniform bounds and exponential stability of discrete approximations obtained by a finite element method and implicit time stepping. Due to convergence of the approximations to weak solutions of the problem, our results also imply regularity, uniqueness, and long time stability of weak solutions of the continuous problem.
\end{abstract}

\maketitle

\begin{quote}
{\footnotesize \textbf{Keywords:} 
gas transport, doubly nonlinear parabolic problems, relative entropy estimates, exponential stability, structure preserving discretization

\noindent
\textbf{MSC (2010):} 
35B40, 
35K59, 
35K67, 
65M12 
}
\end{quote}
\bigskip

\section{Introduction}

We consider a doubly nonlinear parabolic equation of the form 
\begin{align}\label{eq:1}
    \dt \beta(u) - \dx ( \mu(\dx u)) &= 0,
\end{align}
with $\beta$ and $\mu$ strictly monotone functions.
Similar to \cite{Bamberger1977,SchoebelKroehn2020}, our motivation for focusing on the one-dimensional setting stems from the modeling of gas transport in pipelines.
In the high friction limit, the balance equations of mass and momentum can be stated as
\begin{align}
    \dt \rho + \dx m &=0\\
    \rho \dx p(\rho) &= - |m| m,
\end{align}
with density $\rho$, mass flux $m$, and pressure law $p(\rho)$;  
see e.g. \cite{BrouwerGasserHerty2011,EggerGiesselmann2020}.
This system can be written in the form \eqref{eq:1}, with $\beta$ denoting the inverse of 
$\rho \mapsto \int_0^\rho s p'(s) ds$
and $\mu(s)=|s|^{-1/2} s$ and with 
$\rho=\beta(u)$ and $m=\mu(\dx u)$; see \cite{Bamberger1977,DiazThelin1994}.
A fully discrete finite difference approximation  was used in \cite{Raviart1970} to establish existence of weak solutions to \eqref{eq:1} with homogeneous Dirichlet boundary conditions and appropriate initial conditions. 
Uniqueness was proven in \cite{Bamberger1977} via comparison principles and under a mild integrability condition for the time derivative $\dt \beta(u)$; also see \cite{DiazThelin1994,Lindgren22,Otto1996} for related results.
We will call such solutions \emph{regular weak solutions} in the following.
The results mentioned above generalize quite naturally to networks, multiple dimensions, and more general boundary conditions; see the above references as well as \cite{AkagiStefanelli2010, AkagiStefanelli2014,AltLuckhaus1983,DiBenedettoShowalter1981,GrangeMignot1972,Roubicek} for some results in this direction.

In this paper, we study a fully discrete approximation scheme for \eqref{eq:1} which, up to the particular boundary conditions, is equivalent to the methods considered in \cite{Bamberger1977,Raviart1970}; also see 
\cite{SchoebelKroehn2020} for a related method. 
From the proofs in these references, we deduce the convergence of discrete approximations $u_{h,\tau}$ to weak solutions $u$ of \eqref{eq:1}
with mesh size $h$ and time step $\tau$ going to zero.
As a consequence, uniform a-priori estimates obtained for the discrete approximations $u_{h,\tau}$ immediately translate to corresponding bounds for weak solutions $u$ obtained by this limiting process.
In the current manuscript, we focus on the analysis of the underlying discretization scheme. 
Under suitable assumptions on initial and boundary data, which are motivated from the application in gas transport, we  establish
\begin{itemize}\itemsep1ex
\item uniform bounds $0 < \underline u \le u_{h,\tau} \le \overline u$ for the discrete solutions; 
\item  bounds on discrete time derivatives $\dtau u_{h,\tau}$ in $L^2$ independent of $h$ and $\tau$.
\end{itemize}
As an immediate consequence of these results, we obtain the \emph{existence of a regular weak solution $u$} on the continuous level; the arguments of \cite{Bamberger1977,DiazThelin1994}
then imply its \emph{uniqueness}.
The uniform bounds on $u$ further imply that $\beta$ is non-degenerate on the relevant range of values attained by the solution. Similar regularity and uniqueness results under non-degeneracy assumptions on $\beta$ were already established in \cite{DiazThelin1994} using different arguments.
The uniform bounds on $u$ also provide uniform bounds for the density $\rho=\beta^{-1}(u)$, which are of practical relevance in the gas transport context.

We continue our analysis of the discrete system by studying its \emph{long-term stability} using \emph{relative entropy estimates}. Our main results in this direction are
\begin{itemize}\itemsep1ex
\item exponential convergence of discrete solutions $u_{h,\tau}$ to the corresponding steady states, if the boundary data are kept constant;
\item uniform closeness of solutions $u_{h,\tau}$ to time-varying quasi-steady states corresponding to slowly varying boundary data.
\end{itemize}
These results are again uniform in the discretization parameters $h$ and $\tau$, and therefore carry over verbatim to solutions of the continuous problem.
Related stability results for the continuous problem were obtained in \cite{DiazThelin1994}, who however do not make use of uniform lower bounds for the densities stated above. Let us further refer to \cite{AndriyanovaMukminov2013,BhattacharyaMarazzi2016,BhattacharyaMarazzi2022} for some more recent results in this direction.

In contrast to previous work, we here investigate in detail the discretization scheme for \eqref{eq:1}, derive our main results for the fully discrete approximations, and show their uniformity with respect to the discretization parameters $h$ and $\tau$. Corresponding results on the continuous level are obtained as immediate consequences. 
Our analysis is based on a variational characterization of discrete solutions, monotonicity arguments, and relative energy estimates. 
Moreover, we make frequent use of the nonlinear transformation $\rho=\beta(u)$, which carries over to the discrete setting by appropriate design of the numerical approximation. 
Results of a similar flavor can be found in the context of \emph{entropy methods for diffusive partial differential equations}; see \cite{Juengel} for an introduction and further references.
Our main theorems are derived in detail for a single interval, but they generalize automatically to networks, if appropriate coupling conditions are prescribed at pipe junctions, as well as to multiple dimensions; see \cite{EggerGiesselmann2020,SchoebelKroehn2020} and \cite{AltLuckhaus1983,DiazThelin1994,Raviart1970} for results in these directions. Further possibilities for extension will be discussed at the end of the presentation.

\smallskip 

The remainder of the manuscript is organized as follows:
In Section~\ref{sec:prelim}, we present our notation and our main assumptions, 
and introduce our discretization scheme.
The basic properties and a-priori estimates for the discrete solutions are summarized in Section~\ref{sec:properties}. 
Sections~\ref{sec:stability} and \ref{sec:stability2} are then devoted to the long-term stability of the discrete problem. 
In Section~\ref{sec:cont}, we summarize the implications for the continuous problem,
and we conclude with a short discussion.

\section{Preliminaries} \label{sec:prelim}

Let us start with a complete definition of our model problem. 
We consider the numerical approximation of the nonlinear parabolic equation
\begin{alignat}{2} 
    \dt \beta(u) - \dx ( \mu(\dx u)) &= 0, \qquad && 0 < x < \ell, \ t>0, \label{4a} \\
  n  \mu(\partial_x u ) + \alpha u &= \alpha u_\partial,  \qquad &&  x \in \{0,\ell\}, \, t>0. \label{4b}
\end{alignat}
The parameter $n$ in the boundary condition plays the role of a normal vector and takes the value $-1$ at $x=0$ and $1$ at $x=\ell$.
The system is complemented by the initial conditions 
\begin{alignat}{2}
u(0)=u_0, \qquad && 0 < x < \ell. \label{4c}
\end{alignat}
For our analysis, we impose the following conditions on the model parameters and problem data, which can be motivated by the physical background of the problem. 
\begin{assumption} \label{ass:1} 
$ $
\begin{itemize}\itemindent-1em
    \item $\alpha>0$ is a positive constant and $0< T< \infty$ denotes a time horizon;
    \item $\beta \in C^0(\RR)$ with $B(u) := \int_0^u \beta(s) \, ds$ coercive; furthermore $\beta|_{\RR\setminus 0} \in C^1$ with $\beta'>0$; 
    \item $\mu \in C^0(\RR)$, $\mu(0) = 0$,  $\mu|_{\RR\setminus 0} \in C^1$ with $\mu'>0$, and $M(s):= \int_0^s \mu(\tilde s) d\tilde s$; 
    \item 
    $u_0 \in L^\infty(0,\ell)$, such that $M(\dx u_0)\in L^1(0,\ell)$, $u_\partial \in W^{1,\infty}((0,\infty) \times \{0,\ell\})$;
    \item $0 < \underline u \le u_0,u_\partial \le \overline u$ uniformly in $x$ and $t$ with some constants $\overline u,\underline u>0$.
\end{itemize}
\end{assumption}
By our assumptions, $M$ is strictly convex and $\beta$ is strictly monotone with $\beta'(u) \ge \underline \beta'>0$ uniformly bounded from below on compact subsets $C \subset \RR\setminus 0$. 
Furthermore, the function $\eta(\rho) := \int_0^\rho \beta^{-1}(s) ds$, which will be of importance for our analysis later on, is well defined on all of $\RR$,  strictly convex and coercive. 
Also note that $\eta'(\rho)=u$ and $\rho=\beta(u)$. 

\subsection{Weak formulation and entropy dissipation}

By elementary computations, one can verify that any smooth solution of \eqref{4a}--\eqref{4b} 
satisfies the variational identity
\begin{align} \label{var4}
    \la \partial_t \beta(u), v \ra + \la \mu(\dx u), \dx v\ra + \alpha \la u,v\ra_\partial &= \alpha \la u_\partial(t), v\ra_\partial 
\end{align}
for all $ \ t> 0 \text{ and } v \in H^1(0,\ell) $.
For ease of notation, we use the symbols
\begin{align} \label{eq:sp}
\la f,g\ra = \int_0^\ell f g dx 
\qquad \text{and} \qquad 
\la f,g\ra_\partial = f(0) g(0) + f(\ell) g(\ell)
\end{align} 
to denote the scalar products over the domain and the boundary, respectively. 
For the following considerations, we further introduce the \emph{entropy} and \emph{dissipation}  functionals 
\begin{align} \label{eq:5}
\E(u) := \H(\beta(u)) := \int_0^\ell \eta(\beta(u)) dx,
\quad 
\D(u) := \int_0^\ell \mu(\dx u)  \dx u \, dx + \alpha \la u, u \ra_\partial,
\end{align}
which are of relevance for our stability analysis. 
By formally differentiating the entropy along a sufficiently smooth solution of \eqref{4a}--\eqref{4b}, we obtain 
\begin{align*}
\ddt \E(u) 
&= \la \dt \beta(u), \eta'(\beta(u)) \ra
 = -\la \mu(\dx u), \dx u\ra - \alpha \la u - u_\partial, u\ra_\partial \\
&= -\D(u)+ \alpha \la u_\partial,u \ra_\partial.
\end{align*}
This \emph{entropy identity} allows to derive suitable a-priori bounds for $u$ and $\dx u$ in terms of the problem data, which is a key ingredient for establishing the existence of weak solutions; see \cite{Raviart1970,SchoebelKroehn2020}.
A corresponding result will be stated in Section~\ref{sec:cont}.

\subsection{Discretization scheme} \label{sec:3}

For the numerical approximation of \eqref{4a}--\eqref{4c}, we use a suitable discretization of the variational principle \eqref{var4}.
Let $\T_h=\{0 = x_0 < x_1 < \ldots < x_N=\ell\}$ be a uniform mesh of the interval $(0,\ell)$ with grid points $x_i = i h$ and $h=\ell/N$, and let 
\begin{align*}
V_h = P_1(\T_h) \cap C[0,\ell],    
\end{align*}
denote the space of continuous piecewise linear finite element functions defined on this mesh. 
We further write $I_h : H^1(0,\ell) \to V_h$ for the piecewise linear nodal interpolation operator and introduce an approximation
\begin{align}
    \la a,b\ra_h = \int_0^\ell I_h(a b) \, dx
\end{align}
for the scalar product $\la a,b \ra=\int_0^\ell a b \, dx$ obtained through numerical integration by the trapezoidal rule.
For later reference, let us mention that 
\begin{align} \label{eq:equiv}
\la v_h,v_h \ra \le \la v_h,v_h\ra_h \le 3 \la v_h,v_h\ra \qquad \forall v_h \in V_h,
\end{align}
i.e., the discrete scalar product is equivalent to the exact one on the discrete space $V_h$.
By $t^k = k \tau$, $k \ge 0$, we denote the sequence of discrete time points with uniform time step $\tau>0$, and we further write $\dtau u^k = \frac{1}{\tau} (u^k - u^{k-1})$ for the backward difference quotient. 
We then consider the following discretization scheme.
\begin{problem}\label{prob2}
    Given $u_h^0 = I_h u^0$ find $u_h^k \in V_h$ for $k \ge 1$ such that
\begin{align} \label{eq:scheme2}
    \la \dtau \beta(u_h^k), v_h \ra_h + \la \mu(\dx u_h^k), \dx v_h\ra + \alpha \la u_h^k,v_h\ra_\partial &= \alpha \la u_\partial(t^k), v_h\ra_\partial  \quad
    \forall v_h \in V_h. 
\end{align}
\end{problem}
\noindent
This scheme combines an implicit Euler method in time and a finite-element approximation in space with numerical integration applied for the first term.
\begin{remark} \label{rem:equivalence}
Due to the use of the inexact scalar product $\la \cdot,\cdot\ra_h$, the nonlinear transformation $\beta(\cdot)$ only acts at the single values of $u_h^k$ at mesh vertices. 
Moreover, $\dx u_h^k$ is piecewise constant and, therefore $\mu(\cdot)$ again only acts on constant values.
The scheme \eqref{eq:scheme2} was thoroughly analyzed in \cite{SchoebelKroehn2020}. Up to boundary conditions, it is equivalent to the finite difference approximation used in \cite[Sec.~2]{Raviart1970} for the discretization of \eqref{4a} with Dirichlet boundary conditions. 
From the theoretical results of these papers, we may thus deduce convergence of the discrete approximations to weak solutions of the continuous problem. 
\end{remark}

\section{Well-posedness and uniform bounds} \label{sec:properties}

As a first part of our analysis, we establish well-posedness of our discretization scheme  and some elementary properties of its solutions. 

\begin{lemma}\label{lem:wp}
Let Assumption~\ref{ass:1} hold. Then Problem~\ref{prob2} has a unique solution $(u_h^k)_{k \ge 0}$. Moreover, the discrete entropy
$ 
\E_h(u_h):= \H_h(\beta(u_h)) := \la \eta(\beta(u_h)),1\ra_h
$
is dissipated, i.e.
  \[   \dtau \E_h(u_h^k) \leq - \D(u_h^k) + \alpha \la u_h^k, u_\partial(t^k)\ra.\]
\end{lemma}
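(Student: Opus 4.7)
The proof splits into two independent pieces: well-posedness at each time step, which I would obtain from a variational reformulation, and the entropy inequality, which follows from testing with $v_h=u_h^k$ together with convexity of $\eta$.

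For well-posedness, fix $k\ge 1$ and $u_h^{k-1}\in V_h$. I would recognise \eqref{eq:scheme2} as the first-order optimality condition on the finite-dimensional space $V_h$ for the functional
\begin{align*}
J(u_h) := \la B(u_h),1\ra_h + \tau \la M(\dx u_h),1\ra + \tfrac{\tau\alpha}{2}\la u_h,u_h\ra_\partial - \la \beta(u_h^{k-1}),u_h\ra_h - \tau\alpha\la u_\partial(t^k),u_h\ra_\partial.
\end{align*}
Here $B$ is convex and coercive by Assumption~\ref{ass:1}, so by \eqref{eq:equiv} the first term is coercive on $V_h$. Strict convexity of $J$ follows from strict convexity of $M$ in $\dx u_h$ together with the observation that the only direction in $V_h$ with vanishing derivative is a constant, which is then detected strictly by the boundary quadratic (since $\alpha>0$). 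Hence a unique minimizer exists, and its Euler--Lagrange equation is precisely \eqref{eq:scheme2}.

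For the dissipation inequality, the key identity is $\eta'(\rho)=\beta^{-1}(\rho)$, so that $\eta'(\beta(u_h^k))=u_h^k$ at every mesh vertex. Testing \eqref{eq:scheme2} with $v_h=u_h^k$ gives
\begin{align*}
\la \dtau \beta(u_h^k), u_h^k\ra_h + \D(u_h^k) = \alpha \la u_\partial(t^k), u_h^k\ra_\partial.
\end{align*}
To compare the first term with $\dtau \E_h(u_h^k)$, I would invoke convexity of $\eta$ in the form
\begin{align*}
\eta(\tilde\rho)-\eta(\rho) \le \eta'(\tilde\rho)\,(\tilde\rho-\rho) \qquad \text{for all } \rho,\tilde\rho\in\RR,
\end{align*}
apply it at every vertex $x_i$ with $\tilde\rho=\beta(u_h^k(x_i))$ and $\rho=\beta(u_h^{k-1}(x_i))$, and sum against the non-negative trapezoidal weights of $\la\cdot,\cdot\ra_h$. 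This yields
\begin{align*}
\E_h(u_h^k)-\E_h(u_h^{k-1}) \le \la \beta(u_h^k)-\beta(u_h^{k-1}),u_h^k\ra_h = \tau \la \dtau \beta(u_h^k), u_h^k\ra_h.
\end{align*}
Dividing by $\tau$ and combining with the tested equation gives the claim.

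The substantive step is this last discrete chain-rule inequality, and it hinges on the mass-lumping in the first term of \eqref{eq:scheme2}: because $\la\cdot,\cdot\ra_h$ is a nodal sum, the nonlinear functions $\beta$ and $\eta\circ\beta$ act on single nodal values of $u_h^k$ and $u_h^{k-1}$, and the pointwise convexity inequality can be summed without any cross-terms from interpolation. Compared with the formal continuous chain rule, the discrete version loses equality because of the implicit Euler step, and it is exactly convexity of $\eta$ (equivalently, strict monotonicity of $\beta$) that absorbs this defect. Everything else reduces to standard finite-dimensional convex optimisation, so I expect no technical surprises outside this structural point.
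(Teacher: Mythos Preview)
Your proof is correct and follows essentially the same route as the paper: the same variational functional (your $J$ is $\tau$ times the paper's $L$) for well-posedness, and the same convexity-of-$\eta$ argument combined with testing by $v_h=u_h^k$ for the entropy inequality. Two minor remarks: the reference to \eqref{eq:equiv} for coercivity is not quite the right citation (that estimate concerns $\la v_h,v_h\ra_h$, whereas coercivity here comes directly from coercivity of $B$ and positivity of the trapezoidal weights), and strict convexity already follows from $\la B(\cdot),1\ra_h$ alone since $\beta$ is strictly increasing, so your detour through $M$ plus the boundary quadratic, while correct, is not needed.
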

\begin{proof}
Note that $u_h^k$ satisfies \eqref{eq:scheme2} if and only if it minimizes 
\begin{equation*}
  L(w_h):= \frac1{\Delta t}   \langle   B(w_h) -\beta(u_h^{k-1}) w_h ,1 \rangle_h + \langle M(\partial_x w_h), 1 \rangle + 
 \alpha  \langle \tfrac{1}{2} w_h^2 - u_\partial(t^k)w_h,1 \rangle_\partial\,. 
\end{equation*}
By Assumption 1, the functional $L(\cdot)$ is strictly convex and coercive on $V_h$, and therefore admits a unique minimizer. This ensures well-posedness of the discrete problem.
By convexity of the entropy density $\eta(\cdot)$ and using \eqref{eq:scheme2}, we further obtain that
\begin{align*}
    \dtau \E_h(u_h^k)
    &\le \la \dtau \beta(u_h^k), \eta'(\beta(u_h^k))\ra_h 
     = \la \dtau \beta(u_h^k), u_h^k\ra_h\\
    &= - \la \mu (\dx u_h^k), \dx u_h^k\ra 
    - \alpha \la u_h^k- u_\partial(t^k),u_h^k\ra_\partial .
\end{align*}
This yields the entropy inequality which was announced in the lemma. 
\end{proof}

\begin{remark}\label{re:aed}
    Note that the entropy dissipation can also be stated as
\begin{align*}
    \dtau \E(u_h^k) + \la \mu(\dx u_h^k), \dx u_h^k\ra + \alpha \la u_h^k- u_\partial(t^k), u_h^k- u_\partial(t^k)\ra_\partial
    \le \alpha  \la u_h^k- u_\partial(t^k),u_\partial(t^k) \ra_\partial.
\end{align*}
This will be used below to derive bounds for the discrete solutions $u_h^k$ in terms of the problem data $u_0$ and $u_\partial$ with uniform constants independent of $T$.
\end{remark}

We are now in the position to state and prove the first main result of our paper. 
\begin{theorem} \label{thm:main1}
Let Assumption~\ref{ass:1} hold and $(u_h^k)_{k \ge 0}$ denote the unique solution of Problem~\ref{prob2}. 
Then
$\underline u \le u_h^k \le \overline u$ for all $0 \le t^k \le T$ and
\begin{align*}
   \sum\nolimits_{t^k \le T} \tau \D(u_h^k) \le C
   \quad \text{and } \quad
      \sum\nolimits_{t^k \le T} \tau  \|\dtau u_h^k\|_{L^2(0,\ell)}^2  \le C
\end{align*}
with a constant $C=C(T)$ depending only on the problem data but independent of the discretization parameters $h$ and $\tau$. 
Moreover, $\int_0^\ell M(\dx u_h^k) dx \leq C$ for all $0 \leq t^k \le T$.
\end{theorem}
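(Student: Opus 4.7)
The four conclusions will be established sequentially, starting with the pointwise bounds, which keep $\beta$ away from its potential degeneracy at $0$ and thereby enable all subsequent energy-type arguments. For the upper bound I would proceed by induction on $k$: assuming $u_h^{k-1}(x_i) \in [\underline u, \overline u]$ at all nodes, I would test \eqref{eq:scheme2} with $v_h := I_h (u_h^k - \overline u)_+ \in V_h$, whose nodal values are $(u_h^k(x_i) - \overline u)_+$. The resulting identity $\la \dtau \beta(u_h^k), v_h\ra_h + \la \mu(\dx u_h^k), \dx v_h\ra + \alpha \la u_h^k - u_\partial(t^k), v_h\ra_\partial = 0$ has each term nonnegative: the first because $v_h(x_i) > 0$ forces $u_h^k(x_i) > \overline u \ge u_h^{k-1}(x_i)$, combined with monotonicity of $\beta$; the third because at such nodes $u_h^k \ge \overline u \ge u_\partial(t^k)$; the second by an element-wise case analysis in which $\dx v_h$ either vanishes or shares the sign of $\dx u_h^k$, combined with $\mu(0) = 0$ and the monotonicity of $\mu$. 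Since the sum is zero while each summand is nonnegative, each vanishes; strict monotonicity of $\beta$ in the first term then forces $v_h \equiv 0$, giving $u_h^k \le \overline u$. The lower bound follows symmetrically with $v_h := I_h (\underline u - u_h^k)_+$.

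Once the pointwise bounds are secured, summing the entropy inequality of Lemma~\ref{lem:wp} over $k$ with $t^k \le T$ telescopes to $\sum_k \tau \D(u_h^k) \le \E_h(u_h^0) - \E_h(u_h^K) + \alpha \sum_k \tau \la u_h^k, u_\partial(t^k)\ra_\partial$; since $u_h^k \in [\underline u, \overline u] \subset (0,\infty)$ keeps $\eta \circ \beta$ bounded, the right-hand side is $\le C(T)$. For the remaining two bounds, I would test \eqref{eq:scheme2} with $v_h = \dtau u_h^k$, multiply by $\tau$, and sum. The $\beta$-term yields a lower bound $\underline \beta' \tau \sum \|\dtau u_h^k\|_{L^2(0,\ell)}^2$ via the mean value theorem applied on $[\underline u, \overline u]$ (where $\beta' \ge \underline\beta' > 0$) together with \eqref{eq:equiv}. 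The $\mu$-term, using the convexity inequality $\mu(a)(a-b) \ge M(a) - M(b)$, telescopes to $\la M(\dx u_h^K), 1\ra - \la M(\dx u_h^0), 1\ra$; the boundary term analogously telescopes to $\tfrac{\alpha}{2}(\la (u_h^K)^2, 1\ra_\partial - \la (u_h^0)^2, 1\ra_\partial)$. The right-hand side $\alpha \tau \sum \la u_\partial(t^k), \dtau u_h^k\ra_\partial$ is handled by discrete summation by parts transferring $\dtau$ onto $u_\partial$, and controlled by $C(T)$ using $u_\partial \in W^{1,\infty}$ together with Step~1. Initial data are bounded by Assumption~\ref{ass:1}, with $\int M(\dx u_h^0) dx \le \int M(\dx u_0) dx$ by Jensen's inequality for the nodal interpolant. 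Since the estimate is uniform for every $K$ with $t^K \le T$ and $\la M(\dx u_h^K), 1\ra \ge 0$ sits on the left, it yields both the $L^2$-bound on $\dtau u_h^k$ and the pointwise-in-time bound on $\int M(\dx u_h^k) dx$.

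The delicate step is the nonnegativity of the $\mu$-term in the pointwise bound: on transition elements where one nodal value exceeds $\overline u$ while the other does not, one must verify carefully that $\dx u_h^k$ and $\dx v_h$ share a sign. Everything else consists of standard energy manipulations once $u_h^k$ is confined to a compact set away from $0$; the summation-by-parts on the boundary right-hand side in the last step requires some care to ensure the resulting constant is independent of $\tau$ and $h$.
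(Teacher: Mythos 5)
Your proposal is correct, and for the central step --- the uniform bounds $\underline u \le u_h^k \le \overline u$ --- it takes a genuinely different route from the paper. The paper linearizes the scheme around the constant state $\underline u$ via the mean value theorem, writes the result as a nodal system $[\tfrac1\tau D + K + R](u^k-\underline u) = \text{(nonnegative RHS)}$, and invokes the $M$-matrix property to conclude nonnegativity of $u^k - \underline u$. You instead run a Stampacchia-type truncation argument, testing with $v_h = I_h(u_h^k-\overline u)_+$ and checking term by term that each contribution is nonnegative; the element-wise sign compatibility of $\dx v_h$ with $\mu(\dx u_h^k)$, which you rightly flag as the delicate point, does hold because $s\mapsto (s-\overline u)_+$ is nondecreasing and $\mu$ is odd-signed about $0$. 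Both arguments exploit the same one-dimensional piecewise-linear structure (in higher dimensions the paper's $Z$-matrix property and your sign compatibility would each require an acuteness condition on the mesh), so they are morally equivalent; your version is arguably more transparent and avoids the intermediate-value functions $\xi_h^k,\bar\omega_h^k$, while the paper's matrix formulation makes the link to classical discrete maximum principles explicit. For the remaining estimates you follow essentially the paper's path (telescoping the entropy inequality of Lemma~\ref{lem:wp}, then testing with $v_h=\dtau u_h^k$ and using convexity of $M$), with one minor variant: you telescope the full boundary term $\tfrac\alpha2\la (u_h^k)^2,1\ra_\partial$ and treat $\alpha\la u_\partial^k,\dtau u_h^k\ra_\partial$ by discrete summation by parts, whereas the paper keeps $\tfrac\alpha2|u_h^k-u_\partial^k|_\partial^2$ and closes with Young's inequality and Gronwall; given the already-established $L^\infty$ bounds and $u_\partial\in W^{1,\infty}$, both yield the claimed $T$-dependent constant, and your route in fact dispenses with Gronwall. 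Your Jensen-type bound $\la M(\dx I_h u_0),1\ra\le\int_0^\ell M(\dx u_0)\,dx$ for the interpolated initial datum is a correct detail the paper leaves implicit.
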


The remainder of this section is devoted to the proof of this result, which will be split into two steps. As mentioned in the introduction, the main points of concern here are the uniform bounds for the density $u_h^k$ and for the discrete time derivative $\dtau u_h^k$. The estimate for $\D(u_h^k)$ follows from entropy dissipation.

\subsection*{Uniform bounds} \label{sec:bounds}

As a first step, we establish the  uniform bounds for $u_h^k$; 
similar results have been announced in \cite{Bamberger1977,BambergerSorineYvon1979}.
By the mean value theorem, we may rewrite 
\begin{align*}
\mu(\dx u_h^k) 
&= \mu(\dx u_h^k) - \mu(\dx \underline u) 
= \mu'(\bar \omega_h^k) \dx (u_h^k - \underline u),
\end{align*}
where we used that $\dx \underline u=0$, $\mu(0)=0$, and the fact that $\dx u_h^k$ is piecewise constant. The mean values $\bar \omega_h^k$ are also piecewise constant and $\mu'(\bar \omega_h^k)$ is piecewise constant and positive, due to monotonicity and non-degeneracy of $\mu$. 
In a similar manner, we can expand 
\begin{align*}
I_h (\beta(u_h^k) - \beta(u_h^{k-1}))
&= (I_h \beta(u_h^k) - I_h \beta(\underline u)) - (I_h \beta(u_h^{k-1}) - I_h \beta(\underline u)) \\
&= I_h (\beta'(\xi_h^k) (u_h^k - \underline u)) - I_h (\beta'(\xi_h^{k-1}) (u_h^k - \underline u)   )
\end{align*}
with appropriate intermediate values $\xi_h^k$, $\xi_h^{k-1} \in V_h$.  
By using these formulas after subtracting the trivial identity
\begin{align*}
    \la \dtau \beta(\underline u), v_h \ra_h + \la \mu(\dx \underline u), \dx v_h\ra + \alpha \la \underline u,v_h\ra_\partial &= \alpha \la \underline u, v_h\ra_\partial  
\end{align*}
from \eqref{eq:scheme2}, we arrive at the linarized identity
\begin{align*}
\frac{1}{\tau} \la \beta'(\xi_h^k) (u_h^k &- \underline u), v_h \ra_h + \la \mu'(\bar \omega_h^k) \dx (u_h^k - \underline u), \dx v_h\ra + \alpha \la u_h^k - \underline u,v_h\ra_\partial\\
&= \frac{1}{\tau } \la \beta'(\xi_h^{k-1}) (u_h^{k-1} - \underline u), v_h \ra_h + \alpha \la u_\partial^k - \underline u, v_h\ra_\partial.  
\end{align*}
By choosing the standard nodal basis for $V_h$, we can transform this variational identity into an equivalent system of nonlinear equations 
\begin{align}\label{eq:mmatrix}
    [\tfrac{1}{\tau} D(\xi^k) + K(\bar \omega^k) + R] (u^k - \underline u) 
      &=  \tfrac{1}{\tau} D(\xi^{k-1}) (u^{k-1}-\underline u) + \alpha R (u_\partial^k - \underline u),
\end{align}
with $D(\xi)$ diagonal positive definite and $K(\bar \omega)$, $R$ weakly diagonally dominant $Z$-matrices ($A_{ii} \ge 0$, $A_{ij} \le 0$ for $i \ne j$, $\sum_j A_{ij} \ge 0$), and $u^k$ denoting the vector of nodal values of the function $u_h^k$, etc.  
From the properties stated above, we see that $[\tfrac{1}{\tau} D(\xi^k) + K(\bar \omega^k) + R]$ is a regular $M$-matrix and the vector on the right hand side of the above equation has non-negative entries. As a consequence the vector $u^k - \underline u$ has non-negative entries, which implies $u_h^k - \underline u \ge 0$. 
In a similar manner, one shows that $u_h^k - \overline u \le 0$.

\subsection*{Estimates for the time derivatives}

We now establish the bounds for 
$\dtau u_h^k$.  
%
%
By  testing the identity \eqref{eq:scheme2} with $v_h = \dtau u_h^k$, we obtain
\begin{align}
0 
&= \la \dtau \beta(u_h^k), \dtau u_h^k\ra_h + \la\mu(\dx u_h^k), \dx \dtau u_h^k\ra + \alpha \la u_h^k - u_\partial^k,\dtau u_h^k\ra_\partial \notag \\
&\ge \la\beta'(\xi_h^k) \dtau u_h^k, \dtau u_h^k\ra_h + \dtau \la M(\dx u_h^k), 1\ra +
\dtau \tfrac{\alpha}{2} | u_h^k - u_\partial^k|_\partial^2
- 
\alpha \la u_h^k - u_\partial^k,\dtau u_\partial^k\ra_\partial.  \label{eq:estdtu}
\end{align}
Here we used that $M(w)$ is a convex primitive of $\mu(w)$, i.e., $M'(w)=\mu(w)$. 
The absolute value of the last term can be bounded by our assumptions on $u_\partial$ and the uniform bounds for $u_h^k$.
Furthermore, we know that $\beta'(\xi_h^k) \ge c_3 > 0$ by strict monotonicity of $\beta$ and boundedness of $\xi_h^k$. 
Applying Young's inequality to the last term in \eqref{eq:estdtu}, using the equivalence \eqref{eq:equiv} of scalar products and Gronwall's lemma, we thus obtain
\begin{align*}
\la M(\dx u_h^k),1\ra &+ \frac{\alpha}{2} \la (u_h^k- u_\partial^k)^2,1\ra_\partial + c_3 \sum\nolimits_{j \le k} \tau \|\dtau u_h^j\|_{L^2(0,\ell)}^2 \\
&\le  
 C \Big( \la M(\dx u_h^0),1\ra + \frac{\alpha}{2} \la (u_h^0-u_\partial(t^0))^2,1\ra_\partial + \sum\nolimits_{j \le k} \tau |\dtau u_\partial(t^j)|_\partial \Big)
\end{align*}
with a constant $C$ that is independent of $h$ and $\tau$.
Using the conditions on the initial and boundary values in Assumption~\ref{ass:1}, we thus deduce the desired bounds for the discrete time derivatives $\dtau u_h^k$. 
%
This completes the proof of Theorem~\ref{thm:main1}. \qed 

\begin{remark}\label{rem:cuniform}
  If $u_\partial \in L^2(0,\infty;\RR^2)$ then we obtain the statement of Theorem \ref{thm:main1} with $C$ independent of $T$. This can be seen as follows: 
 The entropy dissipation inequality as stated in Remark \ref{re:aed} shows that $u_\partial \in L^2(0,\infty;\RR^2)$ implies that 
  $\sum_{t^k \leq T} \tau \| u_h^k - u_\partial^k\|_\partial^2$ is  bounded independently of $T$.
  This allows to bound the last term in \eqref{eq:estdtu} without relying on Gronwall's lemma.
\end{remark}

\section{Exponential convergence to steady state} \label{sec:stability} 

In the second part of our analysis, we consider the stability of discrete solutions for time invariant or slowly varying boundary data. 
We begin by showing that  solutions of Problem~\ref{prob2} converge exponentially fast to steady states defined by 
\begin{align} \label{eq:5}
 \la \mu(\dx \hat u_h), \dx v_h\ra + \alpha \la \hat u_h,v_h\ra_\partial &= \alpha \la \hat u_\partial, v_h\ra_\partial  \quad
    \forall v_h \in V_h,
\end{align}
if the boundary data $u_\partial(t^k) = \hat u_\partial$ are kept constant for $t \ge t_0>0$. 
As a preparatory result, let us first state the well-posedness of the stationary problem.
\begin{lemma} \label{lem:stat}
Let Assumption~\ref{ass:1} hold.
Then for any boundary value $\hat u_\partial$ with $\underline u \le \hat u_\partial \le \overline u$, there exists a unique solution $\hat u_h \in V_h$ of \eqref{eq:5}. 
Moreover, the discrete solution $\dx \hat u_h$ is constant on $[0,\ell]$, $\underline u \le \hat u_h \le \overline u$, and $|\hat u_h|_{W^{1,\infty}(0,\ell)}  \le \frac{\overline u - \underline u}{\ell}$ with $C$ independent of $h$. 
\end{lemma}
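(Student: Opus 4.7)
The plan is to treat the stationary problem as the Euler--Lagrange equation of a strictly convex coercive functional on $V_h$, then exploit the one-dimensional mesh structure to show that the minimiser is affine, and finally derive the $L^\infty$ and Lipschitz bounds by a discrete maximum principle.

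First I would note that, analogously to Lemma~\ref{lem:wp}, \eqref{eq:5} is exactly the optimality condition for the functional
\begin{align*}
L_s(v_h) := \la M(\dx v_h),1\ra + \tfrac{\alpha}{2} \la v_h^2, 1\ra_\partial - \alpha \la \hat u_\partial, v_h\ra_\partial
\end{align*}
on $V_h$. Strict convexity of $M$ (Assumption~\ref{ass:1}) and the positive boundary quadratic yield strict convexity of $L_s$, and coercivity follows because $M(\dx v_h)$ controls $\dx v_h$ while the boundary term controls $v_h$ at $x\in\{0,\ell\}$, so together they bound $v_h$ in $H^1(0,\ell)$. Hence \eqref{eq:5} admits a unique solution $\hat u_h \in V_h$.

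Next I would show that $\dx \hat u_h$ is constant. Since $\hat u_h \in V_h$, its derivative is already piecewise constant, so $\mu(\dx \hat u_h)$ is piecewise constant on $\T_h$. Testing \eqref{eq:5} with the interior nodal hat function $v_h = \varphi_i$ ($1 \le i \le N-1$), for which the boundary contribution vanishes, the variational identity reduces to $\mu_{i-1/2} = \mu_{i+1/2}$, where $\mu_{j+1/2}$ denotes the value of $\mu(\dx \hat u_h)$ on $[x_j,x_{j+1}]$. Thus $\mu(\dx \hat u_h)$ is constant across the whole interval, and strict monotonicity of $\mu$ forces $\dx \hat u_h$ to be constant as well; in particular $\hat u_h$ is affine on $[0,\ell]$.

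The step I expect to require the most care is the $L^\infty$ bound, since the usual parabolic maximum principle is not directly available. Here I would exploit the fact that $\hat u_h$ is affine, so $v_h := (\hat u_h - \overline u)^+$ belongs to $V_h$ and is admissible as a test function in \eqref{eq:5}. Because $\mu(s)\,s \ge 0$, the interior term $\la \mu(\dx \hat u_h), \dx v_h\ra = \la \mu(\dx \hat u_h), \chi_{\{\hat u_h > \overline u\}} \dx \hat u_h\ra$ is non-negative, so that
\begin{align*}
\alpha \la \hat u_h - \hat u_\partial, (\hat u_h - \overline u)^+\ra_\partial \le 0.
\end{align*}
At any boundary point where $\hat u_h > \overline u$, we have $\hat u_h - \hat u_\partial \ge \hat u_h - \overline u > 0$, making the integrand strictly positive and contradicting the inequality; hence $\hat u_h \le \overline u$ at both endpoints, and, by affinity, on all of $[0,\ell]$. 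The lower bound $\hat u_h \ge \underline u$ follows by testing with $-(\hat u_h - \underline u)^-$ in the same manner. Finally, since $\hat u_h$ is affine with endpoint values in $[\underline u, \overline u]$, we obtain $|\dx \hat u_h| = |\hat u_h(\ell) - \hat u_h(0)|/\ell \le (\overline u - \underline u)/\ell$, which is the claimed $W^{1,\infty}$ bound, independent of $h$.
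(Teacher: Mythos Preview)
Your overall strategy is sound and differs from the paper's, which simply refers back to the arguments of Section~\ref{sec:properties} (convex minimisation for existence/uniqueness, and the $M$-matrix argument for the uniform bounds). Your route---first show $\hat u_h$ is affine by testing with interior hat functions, then derive the $L^\infty$ and Lipschitz bounds from that structure---is more direct for the stationary case and makes transparent why the bounds are mesh-independent; the paper's $M$-matrix approach has the advantage of reusing machinery already built for the time-dependent problem without any new computation.

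There is one slip in your Step~3: the claim that $v_h=(\hat u_h-\overline u)^+$ belongs to $V_h$ is not correct in general, since the point where the affine function $\hat u_h$ crosses $\overline u$ need not be a mesh node. This is easily repaired. Once you have established that $\hat u_h$ is affine, it automatically satisfies the variational identity \eqref{eq:5} for \emph{every} $v\in H^1(0,\ell)$, not just $v_h\in V_h$ (this is precisely the paper's remark that $\hat u_h$ is also the exact solution of \eqref{eq:stat1}--\eqref{eq:stat2}); hence you may legitimately test with $(\hat u_h-\overline u)^+\in H^1(0,\ell)$ and the remainder of your argument goes through unchanged. Alternatively, since an affine function attains its extrema at the endpoints, you can bypass the cutoff test entirely: the two Robin conditions give $\hat u_h(0)=\hat u_\partial(0)+\mu(s)/\alpha$ and $\hat u_h(\ell)=\hat u_\partial(\ell)-\mu(s)/\alpha$ with $s=\dx\hat u_h$, and a short case distinction on the sign of $s$ yields $\underline u\le \hat u_h(0),\hat u_h(\ell)\le\overline u$ directly.
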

\noindent
The results follow from the same arguments as used in the proofs of the previous section.
Let us note that the solution $\hat u_h$ of the discrete problem \eqref{eq:5} is also the unique exact solution of the stationary problem
\begin{align}
- \dx \mu(\dx \hat u) &=0  \label{eq:stat1}  \\
n \mu(\dx \hat u) + \alpha \hat u &= \alpha \hat u_\partial, \label{eq:stat2}   
\end{align}
which can be verified by elementary arguments.
We are now in the position to state the main results of this section about the exponential stability of nonlinear problem. 
\begin{theorem} \label{thm:main2}
Let Assumption~\ref{ass:1} hold,
$(u_h^k)_{k \ge 0}$ denote the solution of Problem~\ref{prob2} with $u_\partial(t^k)=\hat u_\partial$ for all $t^k \ge t_0$ and $\hat u_h$ be the solution of \eqref{eq:5}. Then there exist positive constants $c_1^*,c_2^*$, independent of $h$ and $\tau$, such that 
\begin{align*}
    \|u_h^k - \hat u_h^{\vphantom{k}}\|_{L^2(0,\ell)} \le c_1^* e^{-c_2^* (t^k - t^j)} \|u_h^j - \hat u_h^{\vphantom{k}}\|_{L^2(0,\ell)} \qquad \forall t^k \ge t^j \ge 0.
\end{align*}
Moreover, the discrete time differences can be bounded uniformly by
$\sum_{t^k} \tau \| \dtau u_h^k\| ^2 \leq C$.
\end{theorem}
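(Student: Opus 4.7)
The plan is to run a relative entropy argument comparing $u_h^k$ to the steady state $\hat u_h$. As a Lyapunov functional I would use the \emph{discrete relative entropy}
\[
\Phi_h^k := \la \eta(\beta(u_h^k)) - \eta(\beta(\hat u_h)) - \hat u_h\,(\beta(u_h^k) - \beta(\hat u_h)),\,1\ra_h,
\]
which is nonnegative by convexity of $\eta$. By Theorem~\ref{thm:main1} and Lemma~\ref{lem:stat} both $u_h^k$ and $\hat u_h$ take values in $[\underline u,\overline u]$, so $\eta''=1/\beta'$ is uniformly bounded above and below on the relevant range; combined with the equivalence \eqref{eq:equiv}, this gives $\Phi_h^k \simeq \|u_h^k-\hat u_h\|_{L^2(0,\ell)}^2$ with uniform constants.

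The first key step is to derive the differential inequality. Using convexity of $\eta$ together with $\eta'(\beta(u))=u$, a chain-rule estimate for difference quotients yields $\dtau\Phi_h^k \le \la \dtau\beta(u_h^k),\,u_h^k-\hat u_h\ra_h$. Since $u_h^k-\hat u_h \in V_h$, I would test \eqref{eq:scheme2} with $v_h=u_h^k-\hat u_h$ and subtract the stationary identity \eqref{eq:5} tested with the same function to obtain
\[
\dtau \Phi_h^k \;\le\; -\,\la \mu(\dx u_h^k)-\mu(\dx \hat u_h),\,\dx(u_h^k-\hat u_h)\ra \;-\; \alpha|u_h^k-\hat u_h|_\partial^2 \;=:\; -\mathcal{R}_h^k.
\]
Both contributions to $\mathcal{R}_h^k$ are nonnegative by monotonicity. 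The second step is a discrete spectral-gap inequality $\mathcal{R}_h^k \ge c_0 \Phi_h^k$ with $c_0>0$ independent of $h,\tau$. For this I would combine the one-dimensional Poincaré inequality $\|v\|_{L^2}^2 \le C(\|\dx v\|_{L^2}^2 + |v|_\partial^2)$ with a pointwise coercivity bound $(\mu(a)-\mu(b))(a-b) \ge c_1(a-b)^2$ exploiting that $\dx \hat u_h$ is bounded by Lemma~\ref{lem:stat} and that $\mu$ is strictly monotone and smooth away from zero. Once established, the implicit-Euler structure gives $(1+c\tau)\Phi_h^k \le \Phi_h^{k-1}$, so iterating yields $\Phi_h^k \le (1+c\tau)^{-(k-j)}\Phi_h^j \le e^{-c_2^*(t^k-t^j)}\Phi_h^j$ for $c_2^* := \tau^{-1}\log(1+c\tau)$ (bounded below uniformly for bounded $\tau$), and the norm equivalence translates this into the claimed $L^2$ estimate.

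For the second statement, I would revisit the energy identity \eqref{eq:estdtu} on the interval $[t_0,T]$ where $u_\partial(t^k)=\hat u_\partial$, so that $\dtau u_\partial^k=0$ and the forcing on the right vanishes. Summing over $k$ then telescopes $\la M(\dx u_h^k),1\ra$ and $\tfrac{\alpha}{2}|u_h^k-\hat u_\partial|_\partial^2$ and bounds $\sum_{t_0\le t^k\le T}\tau\|\dtau u_h^k\|_{L^2}^2$ by the values of these functionals at time $t_0$, independently of $T$; the contribution from $[0,t_0]$ is finite by Theorem~\ref{thm:main1}. The main obstacle is the spectral-gap step: since $\dx u_h^k$ is only controlled through $\int M(\dx u_h^k)\,dx \le C$ and not pointwise, the nonlinear coercivity $(\mu(a)-\mu(b))(a-b)\ge c_1(a-b)^2$ can degenerate on the set where $|\dx u_h^k|$ is large. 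I expect this has to be handled by splitting the spatial domain into a region where $\dx u_h^k$ stays near $\dx \hat u_h$ (where uniform ellipticity of $\mu$ applies directly) and its complement (where the relative dissipation of $M$ itself, combined with the bounded boundary term, controls the $L^2$ difference via convex-duality arguments).
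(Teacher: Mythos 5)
Your overall architecture matches the paper's: the same relative entropy functional, the same equivalence with the squared $L^2$ distance via the uniform bounds and \eqref{eq:equiv}, and the same differential inequality $\dtau \Phi_h^k \le -\mathcal{R}_h^k$ obtained from convexity of $\eta$ and testing the difference of \eqref{eq:scheme2} and \eqref{eq:5} with $u_h^k - \hat u_h$. The argument for the second claim (returning to \eqref{eq:estdtu} with vanishing forcing and telescoping) is also sound. The genuine gap sits exactly where you flag it: the spectral-gap step. The pointwise coercivity $(\mu(a)-\mu(b))(a-b)\ge c_1 (a-b)^2$ requires \emph{both} arguments to lie in a compact set; Lemma~\ref{lem:stat} controls $\dx \hat u_h$ pointwise, but $\dx u_h^k$ is only controlled through $\int_0^\ell M(\dx u_h^k)\,dx\le C$, and for the motivating case $\mu(s)=|s|^{-1/2}s$ one has $\mu'(s)\to 0$ as $|s|\to\infty$, so $c_1$ degenerates precisely on the set where $|\dx u_h^k|$ is large. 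Your proposed remedy (splitting the spatial domain) is the right move, but the phrase ``the relative dissipation of $M$ itself \dots via convex-duality arguments'' is not an argument and does not by itself deliver $\mathcal{R}_h^k\ge c_0\Phi_h^k$.

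What actually closes this gap in the paper is the following. With $z=\dx\hat u_h$ bounded, one proves the two-regime lower bound $(\mu(y)-\mu(z))(y-z)\ge c\,F(y,z)$, where $F=|y-z|^2$ if $|y-z|\le 1$ and $F=|y-z|$ if $|y-z|>1$; the second regime uses only monotonicity to get $|\mu(y)-\mu(z)|\ge \underline{\mu}'>0$. Feeding this into a Poincar\'e-type estimate split over the good and bad sets yields only $\|u_h^k-\hat u_h\|_{L^2}^2\le C\,N\bigl(\mathcal{R}_h^k\bigr)$ with the superlinear function $N(x)=x+x^2$, i.e. $\mathcal{R}_h^k\ge c\,\nu\bigl(c'\,\|u_h^k-\hat u_h\|_{L^2}^2\bigr)$ with $\nu=N^{-1}$ merely sublinear near infinity. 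The linear decay rate is then recovered by observing that $\H_h(\rho_h^k|\hat\rho_h)$ is a priori uniformly bounded (a consequence of the $L^\infty$ bounds on $u_h^k$ and $\hat u_h$), so that $\nu(y)\ge c_5 y$ on the relevant bounded range. Without these two ingredients your differential inequality is only of the form $\dtau\Phi_h^k\le -c\,\nu(c'\Phi_h^k)$ and does not immediately produce an exponential rate; you need to supply the $N$/$\nu$ inversion and the a priori boundedness argument to complete the proof.
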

\begin{proof}
Since $u_h^k$ and $\hat u_h$ are uniformly bounded, we may assume $t_0 = 0$ without loss of generality. 
Let us define $\rho_h^k = I_h(\beta(u_h^k))$ and $\hat \rho_h = I_h(\beta(\hat u_h))$, and note that 
\begin{align*}
0 < \underline \rho \le \rho_h^k , \hat \rho_h \le \overline \rho, 
\end{align*}
for $\underline \rho = \beta(\underline u)$ and $\overline \rho = \beta(\overline u)$, which follows immediately from the previous results.
As a next step, we introduce the discrete relative entropy 
\begin{align} \label{eq:rel}
    \H_h(\rho|\hat \rho) = \la \eta(\rho),1\ra_h - \la \eta(\hat \rho),1\ra_h - \la \eta'(\hat \rho), \rho - \hat \rho\ra_h.
\end{align}
From the strict convexity of $\eta(\cdot)$, the uniform bounds for $\rho_h^k$ and $\hat \rho_h^{\vphantom{k}}$, and the equivalence of the scalar products $\la \cdot,\cdot\ra_h$ with $\la \cdot,\cdot\ra$ for discrete functions, we deduce that 
\begin{align} \label{eq:equivalence}
    c_1 \|\rho_h^k - \hat \rho_h^{\vphantom{k}}\|_{L^2(0,\ell)}^2 \le \H_h(\rho_h^k | \hat \rho_h^{\vphantom{k}}) \le c_2 \|\rho_h^k - \hat \rho_h^{\vphantom{k}}\|_{L^2(0,\ell)}^2
\end{align}
with uniform constants $c_1,c_2$ only depending on the bounds in Assumption~\ref{ass:1}. 
The first assertion of Theorem~\ref{thm:main2} now follows immediately from inequality
\begin{align} \label{eq:inequality}
\dtau \H_h(\rho_h^k|\hat \rho_h^{\vphantom{k}}) \le -c \H_h(\rho_h^k|\hat \rho_h^{\vphantom{k}}),
\end{align}
summation over the time-steps, and integration in time, and the use of \eqref{eq:equivalence}. 
The bound for the time differences follows with similar arguments as in Remark~\ref{rem:cuniform}.
\end{proof}

The remainder of this section is devoted to the proof of \eqref{eq:inequality} which, for the convenience of the reader, is again split into several steps. 

\subsection*{Relative entropy inequality}

From the definition of the discrete relative entropy, the convexity of $\eta(\cdot)$, and the variational problems \eqref{eq:scheme2} and \eqref{eq:5}, characterizing the discrete solutions $u_h^k$ and $\hat u_h$, we immediately deduce that 
\begin{align} \label{eq:relenin}
\dtau \H_h(\rho_h^k|\hat \rho_h) 
&\le \la \eta'(\rho_h^k) - \eta'(\hat \rho_h), \dtau \rho_h^k\ra_h
= \la u_h^k - \hat u_h, \dtau  \beta (u_h^k)\ra_h
\\ 
&= -\la  \mu (\dx u_h^k)- \mu(\dx \hat u_h), \dx(u_h^k - \hat u_h ) \ra  - \alpha \la u_h^k - \hat u_h, u_h^k - \hat u_h\ra_\partial. \notag 
\end{align}
It remains to show that the two dissipation terms on the right hand side can be bounded appropriately in terms of the discrete relative entropy. 
\subsection*{Bounds for the dissipation terms}
Let us recall from Assumption~\ref{ass:1} that 
$\mu$ is locally Lipschitz continuous and strictly monotonically increasing.
The following auxiliary result then allows us to bound the first dissipation term.
\begin{lemma}
For any $y,z \in \mathbb{R}$  with $|z| \le C $, there exists $c>0$ such that
\begin{align*} 
( \mu(y) - \mu(z)) (y-z)  \geq  c F(y,z)
\end{align*}
with dissipation function
\begin{align*}
F(y,z):= \left\{ \begin{array}{ccc}
| y-z |^2   & \text{ for }  |y - z | \leq 1,\\
|y-z|   & \text{ for }  |y - z | > 1.
\end{array}\right.
\end{align*}
\end{lemma}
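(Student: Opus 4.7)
The plan is to split into two regimes according to whether $|y-z|$ is large or small, and to exploit strict monotonicity of $\mu$ together with compactness in each case.

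In the large-distance regime $|y-z|>1$, where $F(y,z)=|y-z|$ is only linear, the bound should follow from strict monotonicity alone. For $|z'|\le C$ the quantity $\mu(z'+1)-\mu(z')$ is continuous in $z'$ and, by strict monotonicity of $\mu$, strictly positive, hence attains a positive infimum $m_+$ on the compact interval $[-C,C]$; similarly one obtains $m_->0$ for $\mu(z')-\mu(z'-1)$. If $y\ge z+1$ with $|z|\le C$, monotonicity yields $\mu(y)-\mu(z)\ge\mu(z+1)-\mu(z)\ge m_+$, and therefore $(\mu(y)-\mu(z))(y-z)\ge m_+\,|y-z|$; the case $y\le z-1$ is symmetric, producing the linear bound with constant $\min(m_+,m_-)$.

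In the small-distance regime $|y-z|\le 1$, both $y$ and $z$ lie in $[-C-1,C+1]$, and the target is the quadratic bound $(\mu(y)-\mu(z))(y-z)\ge c\,(y-z)^2$. When $[\min(y,z),\max(y,z)]$ stays at positive distance $\delta$ from the origin, the mean value theorem combined with positivity and continuity of $\mu'$ on $\{\delta\le|s|\le C+1\}$ immediately yields such a bound with a constant $c_\delta>0$. The main obstacle I expect is a neighborhood of $y=z=0$, where $\mu'$ is not assumed to exist and, for a completely arbitrary strictly increasing $\mu$, the quadratic estimate can genuinely fail (consider $\mu(s)=s^3$). To close this case I would exploit the concrete form of $\mu$ coming from the application: for $\mu(s)=\mathrm{sign}(s)\sqrt{|s|}$, a direct case analysis bounds the difference quotient $(\mu(y)-\mu(z))/(y-z)$ from below on $[-C-1,C+1]^2$ away from the diagonal. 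The same-sign case reduces via the identity $(\sqrt{y}-\sqrt{z})/(y-z)=1/(\sqrt{y}+\sqrt{z})$ to the uniform upper bound $\sqrt{y}+\sqrt{z}\le 2\sqrt{C+1}$, while the opposite-sign case uses $\sqrt{|y|}+\sqrt{|z|}\ge\sqrt{|y|+|z|}=\sqrt{|y-z|}$, implying a difference quotient bounded below by $1/\sqrt{|y-z|}\ge 1$ throughout the small-distance regime. Combining both regimes with a common constant $c$ then yields the stated inequality.
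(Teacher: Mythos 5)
Your proof is correct, but it takes a genuinely different route in both regimes, and in the small-distance regime it exposes a real subtlety. The paper's proof rests entirely on the single claim that $\mu'$ is bounded below by some $\underline{\mu}'>0$ on all of $[-C-1,C+1]$: the case $|y-z|\le 1$ is then the mean value theorem, and the case $|y-z|>1$ is handled by inserting an intermediate point $\tilde y$ between $y$ and $z$ with $|\tilde y-z|=1$ and using monotonicity plus the first case to get $|\mu(y)-\mu(z)|\ge \underline{\mu}'$. Your large-distance argument instead takes the infimum of $z'\mapsto \mu(z'\pm 1)-\mu(z')$ over the compact set $|z'|\le C$; this is marginally more general, since it uses only continuity and strict monotonicity rather than the derivative bound. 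The substantive difference is in the small-distance regime: you correctly observe that a uniform lower bound on $\mu'$ near $s=0$ does \emph{not} follow from Assumption~1 as written (e.g.\ $\mu(s)=s^3$ satisfies it and violates the quadratic estimate near the origin), and you then verify the required bound by an explicit computation for the model nonlinearity $\mu(s)=\mathrm{sign}(s)\sqrt{|s|}$. The paper simply asserts the lower bound for $\mu'$ on $[-C-1,C+1]$ without deriving it from the assumptions; that assertion is true for $\mu(s)=|s|^{p-2}s$ with $1<p\le 2$ precisely because $\mu'(s)=(p-1)|s|^{p-2}$ blows up (or is constant) at the origin, which is the same mechanism your square-root computation exploits. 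In short, your argument buys an honest treatment of the degenerate point $s=0$ at the price of specializing to the concrete $\mu$ of the application, whereas the paper's shorter proof silently assumes the non-degeneracy of $\mu'$ at $0$ that should properly be added to Assumption~1 or deduced from the restriction $1<p\le 2$ made later in Section~6.
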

\begin{proof}
The function  $\mu'(\cdot)$ is uniformly bounded from below on the interval $[-C-1,C+1]$ by a constant $\underline \mu'>0$. 
If $|y-z|\leq 1$, we thus have
$$
|\mu(y)-\mu(z)| \ge \underline \mu' |y-z|,
$$
which yields the first part of the assertion with $c=\underline \mu'$. 
For the second case, we choose $\tilde y$ between $y$ and $z$ with $|\tilde y-z| = 1$. By monotonicity and the previous bound, we see that
\begin{align*}
|\mu(y) - \mu(z)| = |\mu(y) - \tilde \mu(y)| + |\mu(\tilde y) - \mu(z)| \ge 0 + \underline \mu' |\tilde y - z| = \underline \mu'.
\end{align*}
This yields the estimate for the second case.
\end{proof}

\subsection*{An intermediate step}
By the auxiliary results derived so far, we arrive at 
\begin{align} \label{eq:intermediate}
\dtau \H_h(\rho_h^k|\hat \rho_h) 
\leq 
- c \left( \int_0^\ell F(\dx u^k_h,\dx \hat u_h ) dx  +    |u_h^k - \hat u_h|_\partial^{2} \right).
\end{align}
Since $F \geq 0$, this already shows that
\[
c \sum_{t^k} \tau |u_h^k - \hat u|^2_\partial
\le \H(\rho^0_h, \hat \rho_h).
\]
which, by the arguments of Remark  \ref{rem:cuniform},
implies that we obtain the desired bound for $\dtau u_h^k$ uniformly in time.
\smallskip \noindent
By further bounding the right hand side in \eqref{eq:intermediate}, we arrive at the following estimate.
\begin{lemma}
Let $N(x)=x+x^{2}$ and $\nu$ be the inverse function of $N$. Then 
\begin{align*}
\dtau \H_h(\rho_h^k|\hat \rho_h) \le - c_1  \nu( c_2 \, \|u_h^k - \hat u_h\|_{L^2(0,\ell)}^2)
\end{align*}
with constants $c_1,c_2$ independent of $h$ and $\tau$.
\end{lemma}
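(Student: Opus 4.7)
My plan is to estimate the right-hand side of the intermediate inequality \eqref{eq:intermediate} from below by $c_1\,\nu(c_2\,\|u_h^k - \hat u_h\|_{L^2(0,\ell)}^2)$ using a one-dimensional Sobolev embedding. I write $w_h := u_h^k - \hat u_h \in V_h$ and $g := \dx w_h$. Since $\dx \hat u_h$ is constant by Lemma~\ref{lem:stat}, the dissipation function of the preceding lemma reduces to $F(\dx u_h^k, \dx \hat u_h) = \phi(g)$, where $\phi(s) = s^2$ for $|s| \le 1$ and $\phi(s) = |s|$ otherwise. I then split the dissipation integral as $A := \int_0^\ell \phi(g)\,dx = A_1 + A_2$ with $A_1 := \int_{\{|g|\le 1\}} |g|^2\,dx$ and $A_2 := \int_{\{|g|>1\}} |g|\,dx$, so that the right-hand side of \eqref{eq:intermediate} is, up to the constant $c$, equal to $Y := A + |w_h|_\partial^2$.

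The key step is to control $\|w_h\|_{L^2(0,\ell)}^2$ by $A_1$, $A_2^2$, and $|w_h|_\partial^2$. Cauchy-Schwarz on the small-gradient set gives $\|g\|_{L^1(0,\ell)} \le \sqrt{\ell A_1} + A_2$, and combined with the one-dimensional identity $w_h(x) = w_h(0) + \int_0^x g(s)\,ds$ this yields $\|w_h\|_{L^\infty(0,\ell)} \le |w_h|_\partial + \|g\|_{L^1(0,\ell)}$. Squaring and multiplying by $\ell$ then produces
\[
\|w_h\|_{L^2(0,\ell)}^2 \;\le\; \ell\,\|w_h\|_{L^\infty(0,\ell)}^2 \;\le\; C\bigl(|w_h|_\partial^2 + A_1 + A_2^2\bigr),
\]
with $C$ depending only on $\ell$. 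Now by non-negativity of all terms, $Y + Y^2 \ge A_1 + |w_h|_\partial^2 + A_2^2$, so that $\|w_h\|_{L^2(0,\ell)}^2 \le C\,(Y + Y^2) = C\,N(Y)$; applying the monotone inverse $\nu$ gives $Y \ge \nu\bigl(\|w_h\|_{L^2(0,\ell)}^2/C\bigr)$. Inserting this lower bound into \eqref{eq:intermediate} delivers the claim with $c_1 = c$ and $c_2 = 1/C$.

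The main obstacle is identifying the correct interpolation between an $L^2$-type control (valid for small gradients and delivering quadratic dissipation) and an $L^\infty$-type control via the $L^1$-norm of the gradient (valid for large gradients and delivering only linear dissipation). This mixed scaling is precisely what is captured by $N(x) = x + x^2$, whose inverse $\nu$ behaves linearly near zero and like $\sqrt{\,\cdot\,}$ at infinity. Without the Sobolev detour through $L^1$, one cannot bound $\|w_h\|_{L^2}$ by $A$ alone, since large-amplitude spikes in $g$ supported on sets of small measure can keep $A_2$ bounded while making $\|g\|_{L^2}$ arbitrarily large; this is exactly the obstruction that forces the nonlinear rate $\nu$ rather than a clean linear decay in $\|u_h^k - \hat u_h\|_{L^2}^2$.
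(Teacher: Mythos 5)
Your proposal is correct and follows essentially the same route as the paper's proof: both split the domain into the small- and large-gradient regions, apply Cauchy--Schwarz on the small-gradient part and keep the $L^1$ bound on the large-gradient part, control the function values through the fundamental theorem of calculus from the boundary point, and absorb the resulting combination $A_1 + |w_h|_\partial^2 + A_2^2$ into $N$ of the total dissipation before inverting with $\nu$. The only cosmetic differences are that you pass through $\|w_h\|_{L^\infty}$ instead of integrating a pointwise bound on $|w_h(x)|^2$, and you use $Y + Y^2 \ge A_1 + |w_h|_\partial^2 + A_2^2$ in place of the paper's inequality $a + N(b) \le N(a+b)$.
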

\begin{proof}
We define $g(x):= u_h^k(x) - \hat u_h(x)$
and  decompose the domain $[0,\ell]$ into
\begin{align*} 
A := 
 \{ x \in [0,\ell] \, : \, |\dx u_h^k(x) - \dx \hat u_h(x)| \leq 1\} 
 \qquad \text{and} \qquad 
 A^c=[0,\ell]\setminus A.
\end{align*}
This allows us to express the integrand in the second term of \eqref{eq:intermediate} by 
\begin{equation}
   F(\dx u_h^k, \dx \hat u_h) = \left\{ \begin{array}{ccc}
  |\partial_x g(x) |^2 & \text{ on } & A,\\
  |\partial_x g(x) | & \text{ on } & A^c.
   \end{array}\right.
\end{equation}
For any $x \in [0,\ell]$, we can then further estimate
\begin{align*}
    g(x)^2 &\leq 2 g(0)^2 + 2\left( \int_0^x \partial_y g(y) dy \right)^2\\
    & \leq 2 g(0)^2  + 2 \left( \int_A   | \partial_y g(y)| dy   \right)^2  + 2 \left( \int_{A^c}   | \partial_y g(y)| dy   \right)^2\\
    & \leq 2 g(0)^2  + 2 \ell   \int_A | \partial_y g(y)|^2 dy    + 2   \left( \int_{A^c}  | \partial_y g(y)| dy    \right)^2\\
    & \leq c_\ell \left( g(0)^2  + \int_0^\ell F(\dx u^k_h,\dx  \hat u_h ) dx  + \left(\int_0^\ell  F(\dx u^k_h,\dx \hat u_h ) dx
    \right)^2 \right)
\end{align*} 
From the definition of $N(x)$, one can see that $a+N(b) \le N(a+b)$ for any $a,b \ge 0$. 
By integration over $x$ and using the definition of $g(x)$ and $N(x)$, we thus arrive at
\begin{equation}\label{eq:gL2bound}
    \| u_h^k - \hat u_h \|_{L^2}^2 \leq c_\ell \ell N\Big( \| u_h^k - \hat u_h\|_\partial^2 +  \int_0^\ell F(\dx u_h^k,\dx \hat u_h) dx\Big).
\end{equation}
By application of the inverse function $\nu(\cdot)$ on both sides of this inequality, we  obtain the assertion of the lemma.
\end{proof}

\subsection*{Proof of inequality \eqref{eq:inequality}}
By the uniform bounds for $\rho_h^k$, the strict convexity of $\eta(\cdot)$,  elementary bounds for the interpolation $I_h$, and \eqref{eq:equivalence}, we further see that
$$
\| u_h^k - \hat u_h \|_{L^2}^2 
 \geq c_3 \H_h(\rho_h^k|\hat \rho_h).
$$
Together with the previous lemma, we thus obtain 
\begin{equation}
  \dtau H_h(\rho_h^k|\hat \rho_h)   \leq -  c_1 \nu (c_4 \H_h(\rho_h^k|\hat \rho_h) ) 
\end{equation}
with positive constant $c_4$ still independent of $h$ and $\tau$. 
Since $\underline \rho \le \rho^k_h, \hat \rho_h \le \overline \rho$, we know that 
$0 \le \H_h(\rho_h^k|\hat \rho_h) \le C_H$ and 
note that $\nu(y) \ge c_5 y$ for all $y \in [0,C_H]$. This finally leads to 
\begin{equation}
  \dtau \H_h(\rho_h^k|\hat \rho_h)   \leq -  c  \H_h(\rho_h^k|\hat \rho_h)
\end{equation}
with a uniform constant $c>0$ independent of $h$ and $\tau$.

\hfill \qed

\medskip 
\noindent 
In summary, we have thus completed the proof of our second main result.

\section{Stability around slowly varying steady states} \label{sec:stability2}

As a final step of our analysis on the discrete level, we will now demonstrate that the solutions $u_h^k$ of the instationary problem always stay close to the quasi-steady states $\hat u_h^k$, i.e. solutions to \eqref{eq:5}  with the corresponding boundary data $\hat u_\partial^k = u_\partial(t^k)$. 
\begin{theorem} \label{thm:main3}
Let Assumption~\ref{ass:1} hold and let $(u_h^k)_{k \ge 0}$ be a solution of Problem~\ref{prob2} and $\hat u_h^k$, $k \ge 0$, be the solutions of \eqref{eq:5} with corresponding boundary data $\hat u_\partial^k = u_\partial(t^k)$. Then there exist positive constants
$c_1^*,c_2^*, \gamma, \gamma'$ such that
\begin{align*}
 \|u_h^k - \hat u_h^k\|_{L^2(0,\ell)}^2 \le c_1^* e^{-\gamma (t^k - t^{m})} \|u_h^{m} - \hat u_h^m\|_{L^2(0,\ell)}^2 + c_2^* \int_{t^m}^{t^k} e^{-\gamma'(t^k - t)} \|\dt u_\partial\|_\partial^2 \, dt.
\end{align*}
\end{theorem}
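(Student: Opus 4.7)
The plan is to carry out a discrete relative-entropy calculation in the spirit of Theorem~\ref{thm:main2}, but now with a time-dependent reference state. Setting $\rho_h^k := I_h(\beta(u_h^k))$, $\hat\rho_h^k := I_h(\beta(\hat u_h^k))$ and $\H_h^k := \H_h(\rho_h^k | \hat \rho_h^k)$ as in \eqref{eq:rel}, the first step is to differentiate $\H_h^k$ in the discrete time variable by combining the nodewise convexity inequalities $\dtau \eta(\rho_h^k) \le \eta'(\rho_h^k)\dtau \rho_h^k$ and $\dtau \eta(\hat\rho_h^k) \ge \eta'(\hat\rho_h^{k-1})\dtau \hat\rho_h^k$ with the discrete product rule $\dtau(a^k b^k) = a^k \dtau b^k + b^{k-1} \dtau a^k$ applied to the cross term $\la \hat u_h^k,\rho_h^k-\hat\rho_h^k\ra_h$. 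After regrouping this yields
\begin{align*}
\dtau \H_h^k \le \la u_h^k - \hat u_h^k, \dtau \rho_h^k\ra_h + \tau \la \dtau \hat u_h^k, \dtau \hat\rho_h^k\ra_h - \la \dtau \hat u_h^k, \rho_h^{k-1} - \hat\rho_h^{k-1}\ra_h,
\end{align*}
where, compared with Section~\ref{sec:stability}, the last two terms are the genuinely new driving contributions produced by the time variation of $\hat u_h^k$.

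The key simplification is that the quasi-steady state $\hat u_h^k$ solves \eqref{eq:5} with boundary datum $\hat u_\partial^k = u_\partial(t^k)$, so subtracting \eqref{eq:5} from \eqref{eq:scheme2} tested against $v_h = u_h^k - \hat u_h^k \in V_h$ cancels the boundary forcing and turns the first term into exactly the dissipation estimated in Section~\ref{sec:stability},
\begin{align*}
\la u_h^k - \hat u_h^k, \dtau \rho_h^k\ra_h = -\la \mu(\dx u_h^k) - \mu(\dx \hat u_h^k),\dx(u_h^k - \hat u_h^k)\ra - \alpha |u_h^k-\hat u_h^k|_\partial^2,
\end{align*}
which by the $\mu$-monotonicity lemma, the inverse $\nu$-estimate leading to \eqref{eq:gL2bound}, and the uniform bounds on $\rho_h^k,\hat\rho_h^k$ is bounded above by $-c\H_h^k$. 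The middle summand is controlled by $C\tau \|\dtau \hat u_h^k\|_{L^2}^2$ after expressing $\dtau \hat \rho_h^k$ at nodes through the mean value theorem applied to $\beta$, and Young's inequality bounds the third by $\epsilon \H_h^{k-1} + C_\epsilon \|\dtau \hat u_h^k\|_{L^2}^2$. To pass from $\|\dtau \hat u_h^k\|_{L^2}$ to $\|\dtau u_\partial(t^k)\|_\partial$, I would exploit the structural information in Lemma~\ref{lem:stat}: $\hat u_h^k$ is affine in $x$ with slope $s^k$ determined by the scalar equation $2\mu(s^k) + \alpha \ell\, s^k = \alpha(\hat u_\partial^k(\ell) - \hat u_\partial^k(0))$, and its boundary values are then linear in $\hat u_\partial^k$ and $\mu(s^k)$; since $2\mu'(s) + \alpha\ell \ge \alpha\ell>0$, an implicit-function argument together with the uniform $W^{1,\infty}$-bound on $\hat u_h^k$ yields $\|\dtau \hat u_h^k\|_{L^2} \le C\|\dtau u_\partial(t^k)\|_\partial$ with $C$ independent of $h$ and $\tau$.

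Assembling these estimates produces the source-driven discrete Gronwall inequality $(1+c\tau)\H_h^k \le (1+\epsilon\tau)\H_h^{k-1} + C\tau\|\dtau u_\partial(t^k)\|_\partial^2$, valid for $\epsilon<c$ by choosing the Young weight small enough. Iterating from $t^m$ to $t^k$ gives exponential decay of the initial entropy plus a discrete convolution against the source, and Jensen's inequality $\tau\|\dtau u_\partial(t^k)\|_\partial^2 \le \int_{t^{k-1}}^{t^k}\|\dt u_\partial\|_\partial^2\,dt$ converts the discrete source into its continuous counterpart. The uniform equivalence $c_1\|u_h^k-\hat u_h^k\|_{L^2}^2 \le \H_h^k \le c_2 \|u_h^k - \hat u_h^k\|_{L^2}^2$, which follows from strict convexity of $\eta$, the Lipschitz bounds on $\beta$ and $\beta^{-1}$ on the relevant range, and the equivalence \eqref{eq:equiv} of the discrete and exact scalar products, then translates the bound on $\H_h^k$ into the statement of Theorem~\ref{thm:main3}. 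The main obstacle is the source estimate $\|\dtau\hat u_h^k\|_{L^2} \le C\|\dtau u_\partial(t^k)\|_\partial$, which is not visible directly from the variational formulation \eqref{eq:5} and really relies on the quasi-explicit structure of the stationary solution; once it is in place, the remaining entropy bookkeeping is a controlled extension of the calculations of Section~\ref{sec:stability}.
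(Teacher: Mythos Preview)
Your argument is correct and follows the same overall strategy as the paper: a relative-entropy estimate with time-dependent reference, reduction of the principal term to the dissipation inequality already established in Section~\ref{sec:stability}, control of the remaining terms by $\|\hat u_h^k-\hat u_h^{k-1}\|_{L^2}$, and a discrete Gronwall iteration. The bookkeeping is organised slightly differently: the paper splits $\H_h(\rho_h^k|\hat\rho_h^k)-\H_h(\rho_h^{k-1}|\hat\rho_h^{k-1})$ as $(i)+(ii)$ by first freezing the reference at $\hat\rho_h^k$ and then comparing the two references with $\rho_h^{k-1}$ held fixed, whereas you differentiate $\H_h^k$ directly via nodewise convexity and the discrete product rule; the resulting three terms match the paper's expansion of $(ii)$ after one regrouping.

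The one point worth correcting is your remark that the source estimate $\|\dtau\hat u_h^k\|_{L^2}\le C\|\dtau u_\partial(t^k)\|_\partial$ ``is not visible directly from the variational formulation~\eqref{eq:5}''. In fact the paper obtains it precisely from~\eqref{eq:5}: subtracting the two stationary identities, testing with $\hat u_h^k-\hat u_h^{k-1}$, dropping the nonnegative $\mu$-term by monotonicity, and applying Young's inequality gives $\|\hat u_h^k-\hat u_h^{k-1}\|_\partial^2\le\|u_\partial^k-u_\partial^{k-1}\|_\partial^2$, after which the affine structure from Lemma~\ref{lem:stat} yields the $L^2$-bound. This is both shorter than your implicit-function argument and, more importantly, robust under the generalisations (networks, higher dimensions) mentioned in the paper, where the explicit scalar equation for the slope is no longer available.
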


This shows that the transient solution $u_h^k$ stays close to the corresponding quasi-steady states $\hat u_h^k$, if the boundary data $u_\partial(t^k)$ vary slowly.  
The remainder of this section is devoted to the proof of Theorem~\ref{thm:main3} which, for ease of presentation, is again split into several steps.

=== HIER ===

\subsection*{Stability of stationary states}

As a preparatory result, we show that the steady states defined by \eqref{eq:5} depend stably on the boundary data. 
\begin{lemma}
Let Assumption~\ref{ass:1} hold 
and $\hat u_h^k$ be the solution of \eqref{eq:5} for $\hat u^j_\partial = u_\partial(t^k)$. Then
\begin{align*}
    \|\hat u_h^k - \hat u_h^{k-1}\|^2_{L^2(0,\ell)} \le \frac23 \ell \|u_\partial^k - u_\partial^{k-1}\|_\partial^2.
\end{align*}
\end{lemma}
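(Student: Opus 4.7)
The proof is a standard stability-of-stationary-states argument, exploiting two special features: monotonicity of $\mu$ to kill the nonlinear term, and the fact that $\dx \hat u_h^k$ is a constant (Lemma~\ref{lem:stat}) so that $w := \hat u_h^k - \hat u_h^{k-1}$ is affine on $[0,\ell]$. The boundary penalization $\alpha\langle \cdot,\cdot\rangle_\partial$ controls $w$ at $x \in \{0,\ell\}$, and because $w$ is affine this control propagates to the interior.

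\textbf{Step 1 (Subtract and test).} Write the equation \eqref{eq:5} for $\hat u_h^k$ and for $\hat u_h^{k-1}$, subtract, and test with $v_h = w = \hat u_h^k - \hat u_h^{k-1} \in V_h$. This yields
\begin{align*}
\la \mu(\dx \hat u_h^k)-\mu(\dx \hat u_h^{k-1}),\dx w\ra + \alpha \la w,w\ra_\partial = \alpha \la u_\partial^k - u_\partial^{k-1}, w\ra_\partial.
\end{align*}
By monotonicity of $\mu$, the first term on the left is non-negative and can be dropped. Cauchy--Schwarz on the discrete boundary pairing then gives
\begin{align*}
|w|_\partial^2 \le \|u_\partial^k - u_\partial^{k-1}\|_\partial \, |w|_\partial,
\end{align*}
hence $|w|_\partial^2 \le \|u_\partial^k - u_\partial^{k-1}\|_\partial^2$. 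This is the main \emph{a priori} estimate; after this the rest is a one-dimensional computation.

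\textbf{Step 2 (Affine trace-to-$L^2$ inequality).} By Lemma~\ref{lem:stat}, $\dx \hat u_h^k$ and $\dx \hat u_h^{k-1}$ are each constant on $[0,\ell]$, so $w$ is affine. Writing $w_0 := w(0)$, $w_\ell := w(\ell)$, a direct integration gives the identity
\begin{align*}
\|w\|_{L^2(0,\ell)}^2 = \tfrac{\ell}{3}\bigl(w_0^2 + w_0 w_\ell + w_\ell^2\bigr).
\end{align*}
Using $w_0 w_\ell \le w_0^2 + w_\ell^2$ (which follows from $2|w_0 w_\ell| \le w_0^2 + w_\ell^2$) yields
\begin{align*}
\|w\|_{L^2(0,\ell)}^2 \le \tfrac{2\ell}{3}\bigl(w_0^2 + w_\ell^2\bigr) = \tfrac{2\ell}{3}\,|w|_\partial^2.
\end{align*}
Combining with the bound from Step~1 gives the claim.

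\textbf{Obstacles.} There is essentially no obstacle: the only point that requires slight care is invoking Lemma~\ref{lem:stat} to justify that $w$ is affine (so that the sharp one-dimensional identity for $\|w\|_{L^2}^2$ in terms of the two endpoint values can be used). Once that is noted, everything reduces to an elementary algebraic inequality and a single test of the difference of the two stationary equations.
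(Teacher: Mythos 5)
Your proof is correct and follows essentially the same route as the paper: subtract the two stationary equations, test with the difference, drop the monotone $\mu$-term to get the boundary bound, and then use the affine structure from Lemma~\ref{lem:stat} to pass to the $L^2$-norm. Your Step~2 simply makes explicit the ``elementary calculations'' the paper leaves to the reader, and it reproduces the constant $\tfrac{2}{3}\ell$ exactly.
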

\begin{proof}
By subtracting the corresponding equations \eqref{eq:5} with data $\hat u_\partial^k$ and $\hat u_\partial^{k-1}$, testing with $v_h=\hat u_h^k - \hat u_h^{k-1}$, using monotonicity of $\mu$, and applying Young's inequality, we obtain
\begin{align*}
    \|\hat u_h^k - \hat u_h^{k-1}\|^2_{\partial} \le  \|u_\partial^k - u_\partial^{k-1}\|_\partial^2.
\end{align*}
Since $\hat u^k, \hat u^{k-1}$ are affine linear, see Lemma \ref{lem:stat}, the corresponding bound for the $L^2$-norm can now be obtained by elementary calculations. 
\end{proof}


\subsection*{Relative entropy estimates}

Let $\rho_h^k= I_h(\beta(u_h^k))$, $\hat \rho_h^k= I_h(\beta(\hat u_h^k))$, and
 $\H_h(\cdot|\cdot)$ be  defined as in Section \ref{sec:stability}.
Then by simple expansion, we see that 
\begin{align*}
\H_h(\rho_h^k|\hat \rho_h^k) &- \H_h(\rho_h^{k-1}|\hat \rho_h^{k-1}) \\
&= \big[ \H_h(\rho_h^k|\hat \rho_h^k) - \H_h(\rho_h^{k-1}|\hat \rho_h^{k}) \big]
+ \big[ \H_h(\rho_h^{k-1}|\hat \rho_h^k) - \H_h(\rho_h^{k-1}|\hat \rho_h^{k-1}) \big] = (i)+(ii).
\end{align*}
From the inequality \eqref{eq:inequality} derived in the previous section, we know that 
\begin{align*}
(i) =  \H_h(\rho_h^k|\hat \rho_h^k) - \H_h(\rho_h^{k-1}|\hat \rho_h^{k}) 
\le -c \tau \H_h(\rho_h^{k}|\hat \rho_h^{k}) 
\end{align*}
with a uniform constant $c>0$; this already yields the required bound for the first term. 
In order to further estimate the second term, we use that
\begin{align*}
(ii) 
&= \H_h(\rho_h^{k-1}|\hat \rho_h^{k}) - \H_h(\rho_h^{k-1}|\hat \rho_h^{k-1}) \\
&= \la \eta(\rho_h^{k-1}),1\ra_h - \la \eta(\hat \rho_h^k),1\ra_h - \la \eta'(\hat \rho_h^k), \rho_h^{k-1} - \hat \rho_h^k\ra_h \\
& \qquad \qquad 
- \la\eta(\rho_h^{k-1}),1\ra_h + \la \eta(\hat \rho_h^{k-1}),1\ra_h + \la \eta'(\hat \rho_h^{k-1}) , \rho_h^{k-1} - \hat \rho_h^{k-1} \ra_h \\
&=\la \eta(\hat \rho_h^{k-1}) - \eta(\hat \rho_h^k) - \eta'(\hat \rho_h^{k-1})(\hat \rho_h^{k-1} - \hat \rho_h^k),1\ra_h 
- \la \eta'(\hat \rho_h^{k-1}) - \eta'(\hat \rho_h^k), \hat \rho_h^{k} - \hat \rho_h^{k-1} \ra_h  \\
& \qquad \qquad + \la \eta'(\hat \rho_h^{k-1}) - \eta'(\hat \rho_h^k), \rho_h^{k-1} - \hat \rho_h^{k-1}\ra_h  \\
&\le c_1 \|\hat \rho_h^{k} - \hat \rho_h^{k-1}\|_{L^2(0,\ell)}^2 + c_2 \|\hat \rho_h^k - \hat \rho_h^{k-1}\|_{L^2(0,\ell)} \|\rho_h^{k-1} - \hat \rho_h^{k-1}\|_{L^2(0,\ell)} 
\end{align*}
From the definitions of $\rho_h^k$ and $\hat \rho_h^k$, the uniform Lipschitz continuity of $\beta$ on $[\underline u, \overline u]$, and the estimates of the previous Lemma, we deduce that
\begin{align*} \|\hat \rho_h^{k} - \hat \rho_h^{k-1}\|_{L^2(0,\ell)}^2
\leq C
\|u_h^{k} - \hat u_h^{k-1}\|_{L^2(0,\ell)}^2
\le C \tau
\int_{t^{k-1}}^{t^k} \| \dt u_\partial \|_{\partial}^2
\end{align*}
By application of Young's inequality, we thus arrive at
\begin{align*}
(ii) 
    \le c_3 (c_1 \tau + c_2^2) \int_{t^{k-1}}^{t^k} \|\dt u_\partial\|_\partial^2 \, dt  +  \tau \tfrac{c_5}{2} \H_h(\rho_h^{k-1}|\hat \rho_h^{k-1})
\end{align*}
where we can make $c_5$ small and ensure $c_5 < c$.
By combination of the previous estimates, we finally obtain 
\begin{align*}
\H_h(\rho_h^k|\hat \rho_h^k) &-\H_h(\rho_h^{k-1}|\hat \rho_h^{k-1}) \\
&\le -c \tau \H_h(\rho_h^k|\hat \rho_h^k)   
+ \tau \frac{c_5}{2} \H_h(\rho_h^{k-1}|\hat \rho_h^{k-1}) 
+ c_4 \int_{t^{k-1}}^{t^k} \|\dt u_\partial\|_\partial^2 \, dt 
\end{align*}
Rearranging the terms then further leads to
\begin{align*}
    \H_h(\rho_h^k|\hat \rho_h^{k}) \le e^{-\gamma \tau} \H_h(\rho_h^{k-1}|\hat \rho_h^{k-1}) + c_1'  \|\dt u_\partial\|^2_{L^2(t^{k-1},t^k;\RR^2)},
\end{align*}
with $\gamma>0$, depending only on $c$ but independent of $\tau$. 
By recursive application of this inequality, we finally obtain the stability estimate 
\begin{align*}
\H_h(\rho_h^k|\hat \rho_h^k) 
&\le e^{-\gamma (t^k - t^{m})}     \H_h(\rho_h^{m}|\hat \rho_h^m) + C' \int_{t^m}^{t^k}  e^{-\gamma(t^k-s)} \|\dt u_\partial(s)\|_\partial^2 \, ds
\end{align*}

\subsection*{Proof of Theorem~\ref{thm:main3}}

By the uniform bounds for $u_h^k$, $\hat u_h^k$ and $\rho_h^k=\beta(u_h^k)$, $\hat \rho_h^k=\beta(\hat u_h^k)$, the equivalence statement \eqref{eq:equivalence}, and the conditions on $\beta$, we see that 
\begin{align*}
c' \| u_h^m - \hat u_h^m\| \le \H_h(\rho_h^{m}|\hat \rho_h^m)  \leq C' \| u_h^m - \hat u_h^m\|.
\end{align*}
By the previous estimates, we now obtain the assertion of Theorem~\ref{thm:main3}.  \qed
%

\section{Consequences for the continuous problem}\label{sec:cont}

As a last part of our analysis, we now draw some conclusions of our results obtained so far. 
For ease of presentation, we we assume a special form 
\begin{equation}\label{eq:mu}
\mu(s)= |s|^{p-2} s , \quad 1 < p \leq 2
\end{equation}
for the second model parameter. 
From the uniformity of the previous estimates and the convergence of discrete approximations to weak solutions, we immediately deduce
\begin{theorem} \label{thm:cont}
Let Assumption~\ref{ass:1} and condition \eqref{eq:mu} hold. Then \eqref{4a}--\eqref{4c} has a unique regular weak solution 
\begin{align*}
u &\in L^\infty(0,T;W ^{1,p}(0,\ell)) \cap H^1(0,T;L^2(0,\ell)) .
\end{align*}
Moreover $\underline u \le u \le \overline u$ on $(0,\ell) \times (0,T)$.
\end{theorem}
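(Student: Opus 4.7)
\subsection*{Proof plan for Theorem~\ref{thm:cont}}

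The plan is to transfer the uniform a~priori bounds derived in Theorem~\ref{thm:main1} from the discrete to the continuous level by passing to the limit $h,\tau\to 0$, and then to invoke the classical uniqueness argument of Bamberger, which applies because the uniform bounds make $\beta$ non-degenerate on the range of the solution. Existence of a limiting weak solution, together with the convergence of $u_{h,\tau}\to u$ in suitable topologies, is furnished by the analyses of \cite{Raviart1970,SchoebelKroehn2020} already cited in Remark~\ref{rem:equivalence}; our task here is essentially to identify which of the uniform discrete estimates are preserved in the limit.

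First, I would introduce the piecewise constant (or piecewise affine) in time interpolants $u_{h,\tau}$ of the nodal values $u_h^k$ and collect the bounds supplied by Theorem~\ref{thm:main1}. Under \eqref{eq:mu} we have $M(s)=\tfrac1p|s|^p$, so the estimate $\int_0^\ell M(\dx u_h^k)\,dx\le C$ translates into a uniform bound for $u_{h,\tau}$ in $L^\infty(0,T;W^{1,p}(0,\ell))$; the discrete time-derivative bound $\sum_{t^k\le T}\tau\|\dtau u_h^k\|_{L^2}^2\le C$ gives a uniform bound for $\dt u_{h,\tau}$ in $L^2(0,T;L^2(0,\ell))$; and the pointwise bounds $\underline u\le u_h^k\le \overline u$ pass to the limit by weak-$*$ lower semicontinuity of the constraint. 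Extracting a (sub)sequence that converges weakly-$*$ in $L^\infty(0,T;W^{1,p})$, weakly in $H^1(0,T;L^2)$, and (by Aubin--Lions) strongly in $L^p((0,T)\times(0,\ell))$ then produces a candidate limit $u$ satisfying the stated regularity and $\underline u\le u\le\overline u$.

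The main obstacle is identifying $u$ with a weak solution of \eqref{4a}--\eqref{4b}: while $\beta(u_{h,\tau})\to\beta(u)$ strongly by continuity and uniform boundedness, $\mu(\dx u_{h,\tau})$ has only a weak $L^{p/(p-1)}$ limit, and one must identify this limit with $\mu(\dx u)$. This is handled by the standard Minty-type monotonicity argument used in \cite{Raviart1970,SchoebelKroehn2020}: exploiting the monotonicity of $\mu$ and the entropy identity passed to the limit to upgrade weak to strong convergence of the gradient term, after which continuity yields $\mu(\dx u)$ as the limit. Once $u$ is known to be a weak solution, uniqueness follows from \cite{Bamberger1977,DiazThelin1994}: the $L^\infty$ bounds confine $u$ to $[\underline u,\overline u]\subset\RR\setminus 0$ where $\beta'\ge \underline\beta'>0$ by Assumption~\ref{ass:1}, so $\beta$ is bi-Lipschitz on the relevant range, and the regularity $\dt\beta(u)\in L^2(0,T;L^2)$ (inherited from $u\in H^1(0,T;L^2)$ and the chain rule) exceeds the integrability condition assumed in Bamberger's uniqueness proof. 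Uniqueness implies in turn that the whole family $u_{h,\tau}$ converges, so the bounds hold for the unique limit, completing the proof.
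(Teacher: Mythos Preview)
Your proposal is correct and follows essentially the same route as the paper: existence of the limiting weak solution is delegated to the convergence results in \cite{Raviart1970,SchoebelKroehn2020}, the uniform bounds of Theorem~\ref{thm:main1} are passed to the limit to give the stated regularity and the pointwise bounds $\underline u\le u\le\overline u$, and uniqueness is then quoted from \cite{DiazThelin1994,SchoebelKroehn2020}. Your write-up simply spells out in more detail the compactness and Minty-type identification steps that the paper leaves to those references.
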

\begin{proof}
Existence of weak solutions to \eqref{4a} as limits of discrete approximations obtained via the problem \eqref{eq:scheme2} is proven in \cite{SchoebelKroehn2020}; see \cite{Raviart1970} for 
corresponding results with Dirichlet boundary conditions.
Theorem \ref{thm:main1} then implies the additional regularity and uniform bounds for  the solution. Under the established integrability condition for the time derivative, uniqueness of the weak solution is known; see \cite{DiazThelin1994, SchoebelKroehn2020}.
\end{proof}

From the assertions of Theorem \ref{thm:main3} and the convergence of discrete solutions, we may further conclude the following result about long-term stability of the continuous problem.

\begin{theorem} \label{thm:cont2}
Let Assumption~\ref{ass:1} and condition \eqref{eq:mu} hold. Further let $u$ be the regular weak solution of \eqref{4a}--\eqref{4c} and let $\hat u(t)$ be solutions of the stationary problem \eqref{eq:stat1}--\eqref{eq:stat2} with the corresponding boundary data $\hat u_\partial= u_\partial(t)$. Then
\begin{align*}
 \|u(t) - \hat u(t)\|_{L^2(0,\ell)}^2 \le c_1^* e^{-\gamma (t - s)} \|u(s) - \hat u(s)\|_{L^2(0,\ell)}^2 + c_2^* \int_{s}^{t} e^{-\gamma'(t - r)} \|\dt u_\partial\|_\partial^2\, dr
\end{align*}
\end{theorem}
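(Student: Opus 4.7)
The plan is to obtain this estimate as the limit of the fully discrete stability estimate in Theorem~\ref{thm:main3} as $h,\tau \to 0$, essentially the same strategy as the proof of Theorem~\ref{thm:cont}. Fix $0 \le s < t$. I would pick a sequence of discretization parameters $h_n,\tau_n \to 0$ with $\tau_n$ chosen so that $s$ and $t$ lie on the time grid (or, if $s$ or $t$ is arbitrary, choose the closest grid points $t^{m_n} \to s$ and $t^{k_n} \to t$). Theorem~\ref{thm:main3}, applied at each level, then yields
\begin{align*}
\|u_{h_n}^{k_n} - \hat u_{h_n}^{k_n}\|_{L^2(0,\ell)}^2
\le c_1^* e^{-\gamma(t^{k_n}-t^{m_n})} \|u_{h_n}^{m_n} - \hat u_{h_n}^{m_n}\|_{L^2(0,\ell)}^2 + c_2^* \int_{t^{m_n}}^{t^{k_n}} e^{-\gamma'(t^{k_n}-r)} \|\dt u_\partial\|_\partial^2 \, dr,
\end{align*}
with constants independent of $n$.

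The main content of the proof is then to pass to the limit on both sides. For the left-hand side I would invoke the convergence of the discrete approximations $u_{h_n,\tau_n}$ to the unique regular weak solution $u$ already used in Theorem~\ref{thm:cont}. The uniform bounds of Theorem~\ref{thm:main1}, together with Aubin–Lions compactness, give strong convergence in $L^2((0,T)\times(0,\ell))$ and $u\in H^1(0,T;L^2(0,\ell))\hookrightarrow C([0,T];L^2(0,\ell))$; combined with the equicontinuity encoded in the uniform bound on $\dtau u_h^k$ in $L^2$, this upgrades to pointwise-in-time convergence $u_{h_n}^{k_n}\to u(t)$ and $u_{h_n}^{m_n}\to u(s)$ in $L^2(0,\ell)$. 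For the quasi-steady states I would use Lemma~\ref{lem:stat}: the discrete stationary solution is the affine function through the data, hence $\hat u_{h_n}^{k_n}$ is nothing but the nodal interpolant of $\hat u(t^{k_n})$, which converges to $\hat u(t)$ in $L^2(0,\ell)$ by continuity of $u_\partial$ in time and the explicit linear form of $\hat u$. For the integral term, $\dt u_\partial \in L^\infty\subset L^2$ by Assumption~\ref{ass:1}, and the integrands $e^{-\gamma'(t^{k_n}-r)}\chi_{[t^{m_n},t^{k_n}]}(r)\|\dt u_\partial(r)\|_\partial^2$ are dominated by $\|\dt u_\partial\|_\partial^2\in L^1$ and converge pointwise a.e., so dominated convergence delivers the claimed integral.

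The hard part is ensuring pointwise-in-time $L^2(0,\ell)$ convergence at the specific times $s$ and $t$, rather than merely a.e.\ convergence or weak convergence. This is where the uniform $H^1(0,T;L^2)$ bound from Theorem~\ref{thm:main1}, which translates to an equicontinuity estimate for a suitable piecewise-linear-in-time interpolant of the $u_{h_n}^k$, is essential: combined with the spatial compactness coming from the uniform $W^{1,p}$-in-space bound, it yields relative compactness in $C([0,T];L^2(0,\ell))$ by Arzelà–Ascoli, and the limit must be $u$ by uniqueness of the regular weak solution. All other steps are routine: the exponential factor and the $L^2$ norms are continuous in their arguments, and the constants $c_1^*, c_2^*, \gamma, \gamma'$ are inherited unchanged from the discrete estimate.
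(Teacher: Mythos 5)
Your proposal is correct and follows exactly the route the paper intends: the paper derives Theorem~\ref{thm:cont2} as an immediate consequence of the discrete estimate in Theorem~\ref{thm:main3} together with the convergence of the discrete approximations to the unique regular weak solution, which is precisely your limit-passage argument. Your additional care about pointwise-in-time $L^2$ convergence (via the uniform $H^1(0,T;L^2)$ bound and spatial compactness) and the identification of $\hat u_h^k$ with the exact affine steady state fills in details the paper leaves implicit.
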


This shows that the solution $u$ of the transient problem remains close to the corresponding quasi-steady states, if the boundary data vary slowly. If they are kept constant, then $u(t)$ converges exponentially fast to the corresponding steady state $\hat u$ with $t \to \infty$.

\section{Discussion} \label{sec:summary}

Let us note that the results derived in this paper and the arguments used for their proofs carry over immediately to problems on networks \cite{ EggerGiesselmannKunkelPhilippi,SchoebelKroehn2020} and in higher dimensions \cite{AkagiStefanelli2010,AltLuckhaus1983}, including the parabolic $p$-Laplacian (for $ 1 < p \leq 2$).
We expect that many results also carry over to problems with suitable lower order terms.
A key assumption for our analysis, motivated by applications in gas transport,  was the availability of uniform positive bounds for the initial and boundary data. Problems with vanishing or sign changing solutions have been discussed in \cite{DiazThelin1994} using different approximation techniques.
Further note that the stability results obtained in our analysis may also be useful for investigating controlability and observability of the underlying nonlinear system.
In the spirit of \cite{EggerGiesselmannKunkelPhilippi}, the discrete relative entropy estimates that were derived in this paper may serve as a first step for a rigorous discretization error analysis and the proof of convergence rates. 
This is left as a topic for future research.

{\footnotesize 
\section*{Acknowledgment}
The authors are grateful for financial support by the German Science Foundation (DFG) via grant TRR~154 (\emph{Mathematical modelling, simulation and optimization using the example of gas networks}), project C05 and the \emph{Center for Computational Engineering} at Technische Universität Darmstadt.
}

\nocite{*}

 \bibliographystyle{abbrv}
 \bibliography{parabolic}
\end{document}